\newtheorem{theorem}{Theorem}
\newtheorem{lemma}[theorem]{Lemma}
\newcommand{\cM}{\mathcal{M}}
\newcommand{\cH}{\mathcal{H}}
\newcommand{\cE}{\mathcal{E}}
\begin{document}

\title{High girth hypergraphs with unavoidable monochromatic or rainbow edges}
\author[M.~Axenovich]{Maria Axenovich}
\author[A.~Karrer]{Annette Karrer}

\address[M.~Axenovich, A.~Karrer]{Karlsruher Institut f\"ur Technologie, Karlsruhe, Germany}
\email{maria.aksenovich@kit.edu}

  \date{\today}
 \maketitle

  \begin{abstract}
 A classical result of Erd\H{o}s and Hajnal claims that for any integers $k, r, g \geq 2$ there is an $r$-uniform hypergraph of girth at least $g$ with chromatic number at least $k$. This implies that there are  sparse hypergraphs such that in any coloring of their vertices with at most $k-1$ colors there is a monochromatic hyperedge. 
 We show that for any integers $r, g\geq 2$ there is an  $r$-uniform hypergraph of girth at least $g$ such that in any coloring of its vertices there is either a monochromatic or a rainbow (totally multicolored) edge.
 We give a probabilistic and a deterministic proof of this result.
  \end{abstract}

 \section{Introduction}

   A classical result of Erd\H{o}s and Hajnal \cite{erd66}, Corollary 13.4, claims that for any integers $k, r, g \geq 2$ there is an $r$-uniform hypergraph of girth at least $g$ with chromatic number at least $k$. This implies that there are  sparse hypergraphs such that in any coloring of their vertices with at most $k-1$ colors there is a monochromatic hyperedge.  The original proof was probabilistic.  Other probabilistic constructions were given by Ne\v{s}et\v{r}il and R\"odl   \cite{nese78},  Duffus et al. \cite{duffus},  Kostochka and R\"odl   \cite{kost10}, and, in case of graphs only, by  Erd\H{o}s  \cite{erd59}.
  Several explicit constructions were found later, see   Lov\'asz   \cite{lov68},    Erd\H{o}s and Lov\'asz   \cite{erd75}, 
  Ne\v{s}et\v{r}il and R\"odl  \cite{nese79},  Duffus  et al.   \cite{duffus},  Alon et al.  \cite{alon},  K\v{r}\'i\v{z} \cite{kriz89}, Kostochka and  Ne\v{s}et\v{r}il \cite{kost99}. 
Ne\v{s}et\v{r}il  \cite{nese13} as well as  
Raigorodskii and Shabanov \cite{raig11} gave surveys on the topic.   Some interesting generalizations were treated by Feder and Vardi \cite{feder98},
Kun \cite{kun}, M\"uller     \cite{muller75} , \cite{muller79},  as well as by  Ne\v{s}et\v{r}il  \cite{nese13}.

 When the number of colors used on the vertices of a hypergraph is not restricted, the monochromatic hyperedges could easily be avoided by simply using a lot of  different  colors. Then, however,  so-called rainbow (totally multicolored) hyperedges could appear.    The notion of a proper coloring when both rainbow and monochromatic hyperedges are forbidden was introduced by Voloshin in a concept called bihypergraphs, \cite{V}, see also Karrer \cite{karrer}.
 Here, we show that  there are  sparse hypergraphs in which monochromatic or rainbow hyperedges are unavoidable.
   
 {\it A cycle}  of length $g$ in a hypergraph is a subhypergraph consisting of  $g\geq 2$  distinct  hyperedges $E_0, \ldots, E_{g-1}$  and containing distinct vertices  $x_0, \ldots, x_{g-1}$,  such that $x_i  \in E_i\cap E_{i+1}$, $i=0, \ldots, g-1$, addition of indices modulo $g$.    The {\it  girth} of a hypergraph is the length of a shortest cycle if such exists, and infinity otherwise. Next is our main result.

 \begin{theorem}\label{main}
 For any integers $r, g\geq 2$ there is an  $r$-uniform hypergraph of girth at least $g$ such that in any coloring of its vertices there is either a monochromatic or a rainbow (totally multicolored) edge.
 \end{theorem}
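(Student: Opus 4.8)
The plan is to construct the hypergraph probabilistically and then prune it to high girth. The case $r=2$ is trivial, since in a graph every edge is either monochromatic or rainbow, so the cycle $C_g$ already works; assume $r\ge 3$. Call an $r$-element set $S$ of vertices \emph{forbidden} for a partition $P$ of the vertex set if either $S$ lies in one class of $P$ (a potential monochromatic edge) or $S$ meets $r$ distinct classes (a potential rainbow edge). Since a coloring is nothing but a partition into color classes, a coloring produces neither a monochromatic nor a rainbow edge exactly when the associated partition $P$ has no edge of $H$ that is forbidden for $P$. Thus it suffices to build an $r$-uniform hypergraph $H$ of girth at least $g$ in which every partition of $V(H)$ has a forbidden edge. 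I will take $H$ to be a random hypergraph on $[n]$ with each $r$-set present independently with probability $p$, and afterwards delete one edge from each cycle of length less than $g$; the point is to arrange that this deletion cannot destroy the covering-by-forbidden-edges property.

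The combinatorial heart is the following counting lemma. For a partition $P$ of $[n]$ with class sizes $a_1,\dots,a_m$ the number of forbidden $r$-sets is $N(P)=\sum_i \binom{a_i}{r}+e_r(a_1,\dots,a_m)$, where $e_r$ is the $r$-th elementary symmetric polynomial and the second summand counts the rainbow transversals. I claim $N(P)\ge c_r\, n^r$ for a constant $c_r>0$ and all large $n$, uniformly in $P$. To see this, let $a_1=\max_i a_i$. If $a_1\ge n/(r(r-1))$ then already $\sum_i\binom{a_i}{r}\ge\binom{a_1}{r}=\Omega_r(n^r)$. Otherwise every class is small, and expanding $n^r=(\sum_i a_i)^r=r!\,e_r(a)+R$ with $R$ collecting the terms having a repeated index, one bounds $R\le \binom{r}{2}(\max_i a_i)\,n^{r-1}\le \binom{r}{2}n^r/(r(r-1))=\tfrac12 n^r$, whence $e_r(a)\ge n^r/(2\,r!)$. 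Either way $N(P)=\Omega_r(n^r)$.

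Now choose $p=C_0\,n^{-(r-1)}\ln n$ with $C_0=C_0(r)$ large. For a fixed $P$ the number of forbidden edges present is binomial with mean $p\,N(P)\ge C_0 c_r\, n\ln n$, so by a Chernoff bound the probability that fewer than $s:=\tfrac12 C_0 c_r\,n\ln n$ forbidden edges appear is at most $e^{-\Omega(n\ln n)}$; as there are at most $n^n=e^{n\ln n}$ partitions, taking $C_0$ large makes the union bound over all $P$ tend to $0$. Meanwhile the expected number of cycles of length $\ell$ is at most $(n^{r-1}p)^\ell=(C_0\ln n)^\ell$, so the expected number of cycles of length less than $g$ is $O((\ln n)^{g-1})=o(s)$, and by Markov's inequality there are fewer than $s$ of them with probability tending to $1$. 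Hence some instance $H$ has, simultaneously, at least $s$ forbidden edges for every partition $P$ and fewer than $s$ cycles of length less than $g$. Deleting one edge from each short cycle produces a hypergraph of girth at least $g$; since fewer than $s$ edges are removed while each partition had at least $s$ forbidden edges, every partition retains a forbidden edge, i.e. the desired monochromatic or rainbow edge.

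The step I expect to be the main obstacle is exactly the conflict between the two demands on $p$: forcing each of the $\approx n^n$ partitions to keep a forbidden edge pushes $p$ up, whereas keeping short cycles sparse pushes $p$ down. What makes the window nonempty is the lemma $N(P)=\Omega_r(n^r)$; in particular the ``dangerous'' balanced $2$-colorings, which create no rainbow edge at all, are nevertheless forced to create $\Theta(n^r)$ monochromatic candidates. A deterministic proof should follow the same template, replacing the random hypergraph together with the deletion step by an explicit girth-increasing (e.g.\ amalgamation-type) construction that preserves the property that every partition admits a forbidden edge.
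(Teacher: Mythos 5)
Your proof is correct, but it takes a genuinely different probabilistic route from the paper. The paper's probabilistic argument (Section \ref{probabilistic}, following Ne\v{s}et\v{r}il--R\"odl) is two-staged: it first fixes an $R$-uniform ``scaffold'' hypergraph with $R=(r-1)^2+1$, girth at least $g$, and more than $n^{1+1/g}$ edges, and then randomly shrinks each $R$-edge to an $r$-subset; the key fact is local and pigeonhole-flavored --- under any coloring, every $R$-element edge already contains a monochromatic or rainbow $r$-subset, so for each of the at most $n^n$ colorings at most $(a-1)^m$ of the $a^m$ shrinking sequences are good, where $a = \binom{R}{r}$. Girth is inherited from the scaffold, so no concentration inequality is needed --- just counting. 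Your argument instead works with a one-shot binomial random $r$-uniform hypergraph and the Erd\H{o}s deletion method: the work done by the paper's pigeonhole on $(r-1)^2+1$ vertices is done in your proof by the global counting lemma $N(P)=\sum_i\binom{a_i}{r}+e_r(a_1,\dots,a_m)=\Omega_r(n^r)$ uniformly over partitions (your case split on $\max_i a_i$ and the bound on the repeated-index terms are both correct), after which Chernoff plus a union bound over the at most $n^n$ partitions beats the $e^{-\Omega(C_0 n\ln n)}$ failure probability, and Markov handles the $O((\ln n)^{g-1})$ expected short cycles, which is indeed $o(n\ln n)$, so deleting one edge per short cycle cannot exhaust the $s=\Theta(n\ln n)$ forbidden edges of any partition. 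What your approach buys is a self-contained, single-stage argument using only standard tools (note that the paper's scaffold itself is built by the same random-plus-deletion technique in the Appendix, so in effect you have collapsed the two stages into one); what the paper's approach buys is that it needs no concentration inequality at all, cleanly separates the girth requirement from the coloring requirement, and shows more: a non-colorable shrinking exists inside \emph{every} high-girth $R$-uniform hypergraph with sufficiently many edges. Your closing remark also matches the paper, whose second proof is exactly an explicit amalgamation construction.
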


   We shall give a probabilistic proof and an explicit construction of a desired hypergraph. Our proofs are inspired by amalgamation and probabilistic techniques of  Ne\v{s}et\v{r}il  and  R\"odl.  
To shorten the presentation, we shall say that a hypergraph is {\it rm-unavoidable} if any coloring of its vertices has either a rainbow or a 
monochromatic edge.
We give an explicit construction and use it to prove the main theorem in Section \ref{explicit}. The probabilistic proof is given in Section \ref{probabilistic}.  The proofs of a  few standard results we use are presented in Appendix.

\section{Explicit Construction of rm-unavoidable Hypergraphs}\label{explicit}
The goal of this section is to construct, for each $r\geq 2$ and $g\geq 2$, an  rm-unavoidable hypergraph,  that we shall call $H(r,g)$, of uniformity $r$ and girth $g$.
The three main concepts we use are amalgamation,  special partite hypergraphs forcing rainbow edges, and so-called complete partite factors. All of these notions are defined for partite hypergraphs. A hypergraph is {\it $a$-partite}  if its vertex set can be partitioned in at most  $a$ parts such that each hyperedge contains at most one vertex from each part. We shall first define a part-rainbow-forced hypergraph as a hypergraph having some special coloring properties and give an explicit construction of 
such a hypergraph $PR(r,g)$. Then we incorporate this hypergraph into a more involved  construction of an rm-unavoidable hypergraph $H(r,g)$. Both of these constructions use amalgamation.\\

\begin{figure}[htb] 
\includegraphics[scale=0.8]{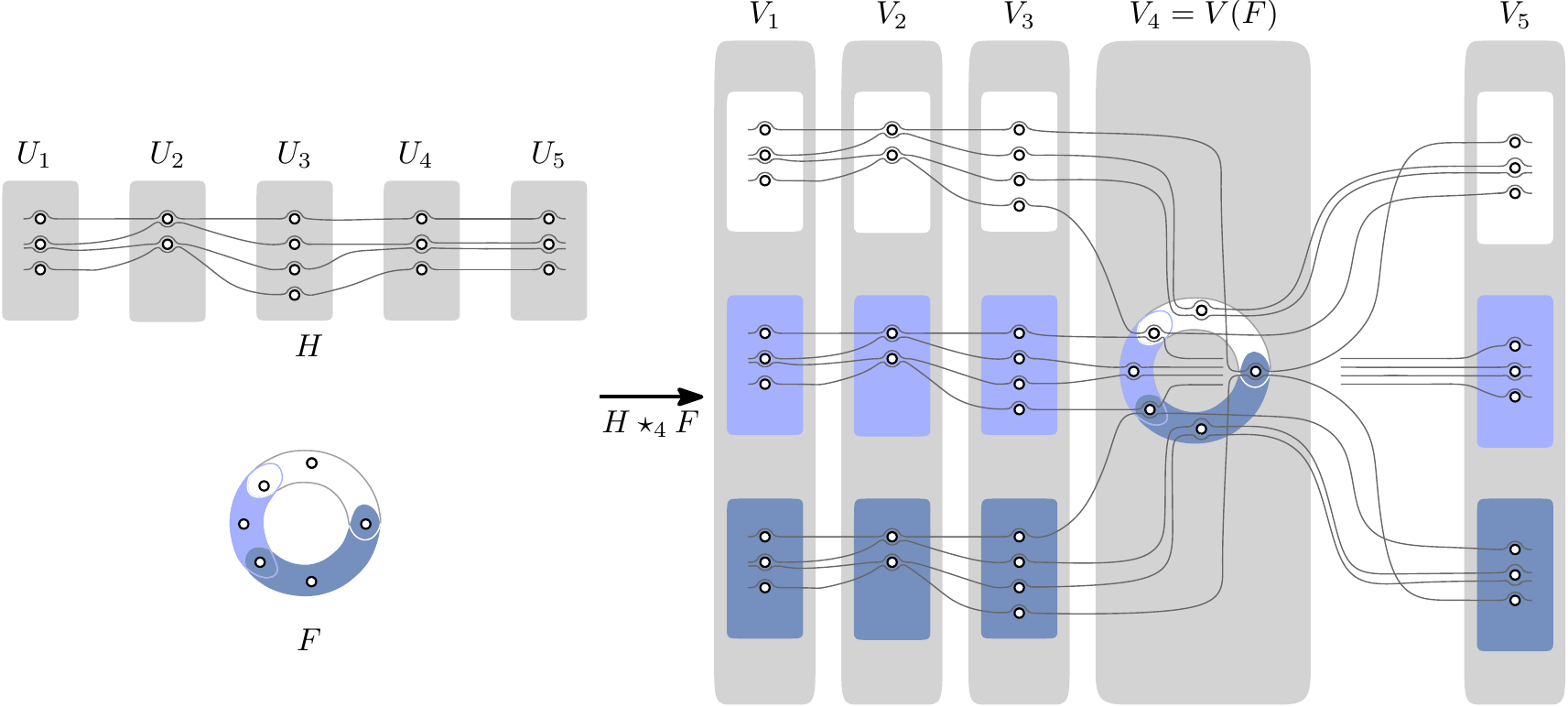}
\caption{Amalgamation of $F$ and $H$ along the $4^{{\rm th}}$ part. 
Here $F$ is a $3$-uniform cycle on $3$ edges, $H$ is $5$-uniform, $5$-partite with $4$ edges. 
The resulting graph is $5$-partite, $5$-uniform, with curves indicating hyperedges and colors indicating distinct copies of $H$, corresponding to the edges of $F$.} \label{amalgamation}
\end{figure}

\begin{description}
\item[Amalgamation]
Given an $a$-partite hypergraph $H$ with  the $i^{{\rm th}}$ part of size $r_i$ and given an  $r_i$-uniform hypergraph $F=(V, \cE)$,
{\it an amalgamation} of $H$ and $F$ along  the $i ^{{\rm th}}$ part, denoted by $H\star_i  F$ is an $a$-partite hypergraph obtained by taking $|\cE|$ vertex-disjoint copies of $H$ and identifying the $i$th part of each such copy with a hyperedge of $F$ such that distinct copies get identified with distinct hyperedges.
Moreover, the $j^{\rm th}$ part of  $H\star_i  F$  is a  pairwise disjoint union of the $j^{{\rm  th}}$  parts from the copies of $H$, for $j\in \{1, \ldots, a\} \setminus \{i\}$, see Figure \ref{amalgamation}.  We shall sometimes say that $H\star_i  F$ is obtained by amalgamating copies of $H$  along the  part $i$ using $F$.\\

\item[Part-rainbow-forced hypergraph]
A vertex coloring of an $a$-partite hypergraph with parts $X_1, \ldots, X_a$ that 
assign $|X_i|$ colors to part $i$, $i=1, \ldots, a$ is called {\it part rainbow}.
We say that an $a$-partite  hypergraph  is {\it part-rainbow-forced} if in any  part-rainbow coloring  there is a rainbow edge.\\

\begin{figure}[htb] 
\includegraphics[scale=0.8]{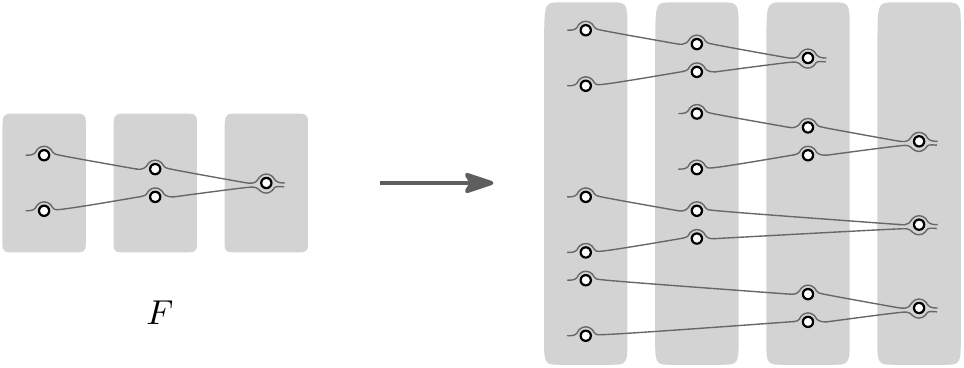}
\caption{An example of a complete $4$-partite $F$-factor, where $F$ is a $3$-partite $3$-uniform hypergraph with two edges.}\label{partite-factor}
\end{figure}

\item[Partite factor]
Let $F$ be an $r$-uniform $r$-partite hypergraph.  A {\it complete  $a$-partite $F$-factor} is an $a$-partite $r$-uniform  hypergraph $G$ that is a  union of   pairwise vertex-disjoint copies $F_1, \ldots, F_{\binom{a}{r}}$ of $F$, such that  each part  of $F_i$ is contained in some part of $G$, $i = 1, \ldots, \binom{a}{r}$ and such that 
the union of any  $r$ parts of $G$   contains the vertex set of  $F_i$, for some $i=1, \ldots, \binom{a}{r}$, see Figure \ref{partite-factor}.\\

 \begin{figure}[htb]
\includegraphics[scale=1.0]{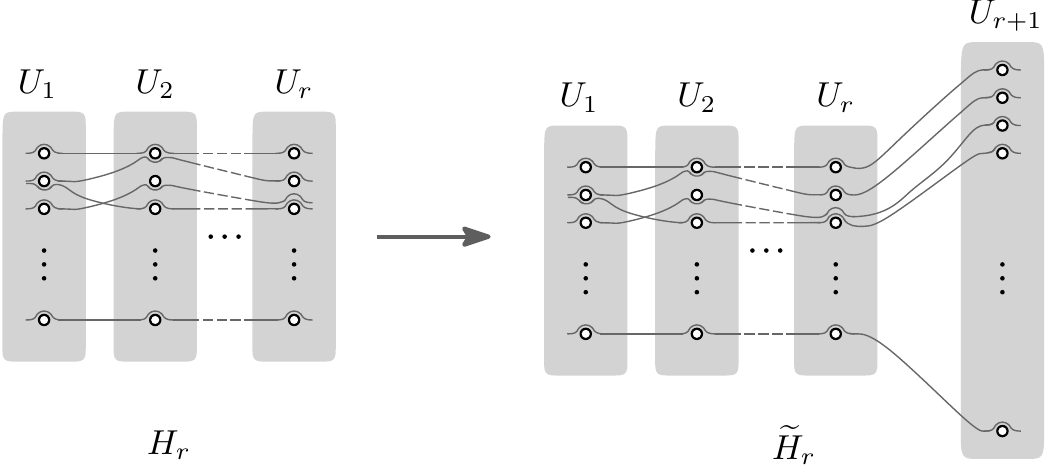}
\caption{Extension of an  $r$-partite  $r$-uniform hypergraph $H_r$ to an $(r+1)$-partite  $(r+1)$-uniform hypergraph $\widetilde{H}_{r}$.} \label{extension}
\end{figure}

\item[Construction of a hypergraph \textbf{\textit{PR(r,g)}}] ~~
Let  $r, g \geq 2$, $g\geq 2$ be fixed.  
Let  $g\geq 2$, let $PR(2, g)$ be a bipartite graph on vertices $x,y,z$ and edges $xy$,  $yz$. 

Assume now that  $PR(r,g)$ has been constructed and it is  an $r$-uniform, $r$-partite hypergraph. 
Let $F' $ be an $\ell$-uniform hypergraph  of girth at least $g$ and minimum degree $\ell (r+1)$, where $\ell = |E(H_r)|$.   We show the existence of $F'$  in Appendix.

For an $r$-uniform $r$-partite hypergraph $H$, let $\widetilde{H}$ be an $(r+1)$-partite  $(r+1)$-uniform hypergraph that is obtained from $H$ by expanding each of its edges by a vertex in a new, $(r+1)^{{\rm st} }$ part such that 
each edge is extended by an own vertex, i.e., the size of the $(r+1)^{{\rm st} }$ part is equal to the number of edges in $H$, see Figure \ref{extension}.

Let $PR(r+1, g) = \widetilde{PR(r, g)} \star _{r+1} F'$, i.e., it is an amalgamation of copies of $\widetilde{PR(r,g)}$ along the $(r+1)^{{\rm st}}$ part using $F'$, see Figure \ref{part-rainbow}.
 \end{description}
~\\

 \begin{figure}[htb] 
\includegraphics[scale=0.85]{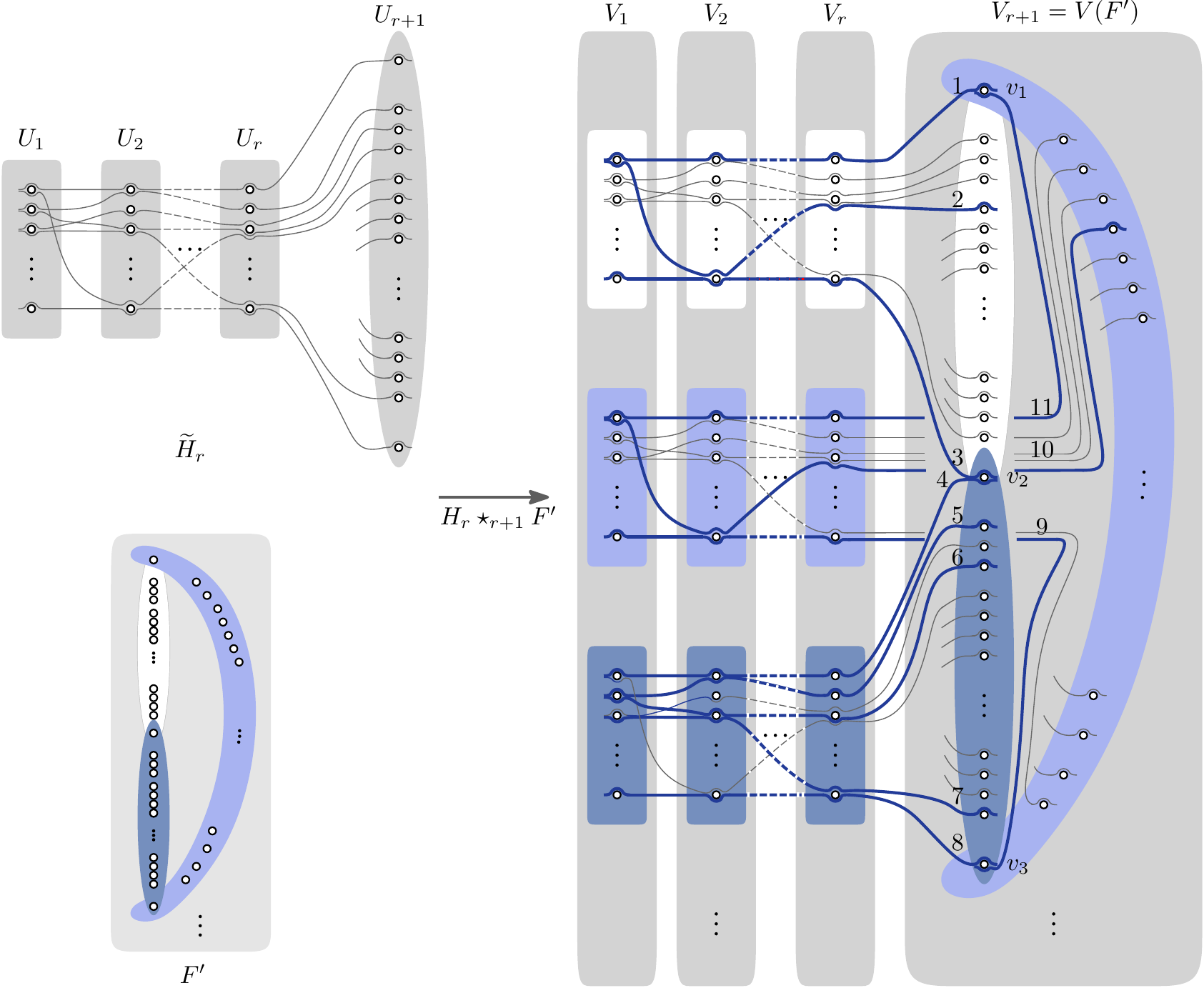}
\caption{Illustration of a part-rainbow-forced  $(r+1)$-uniform hypergraph and a cycle of length $3$ in the amalgamated hypergraph $F'$.  The bold  hyperedges form a cycle of length $11$ in the resulted hypergraph.}\label{part-rainbow}
\end{figure}

 \begin{lemma}\label{Lem2}
 For any integers $r, g\geq 2$,  $PR(r,g)$ is  a part-rainbow-forced $r$-uniform hypergraph of girth $g$.
 \end{lemma}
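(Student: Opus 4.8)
The plan is to induct on $r$, carrying both conclusions (part-rainbow-forced and girth at least $g$) through simultaneously, since the construction builds $PR(r+1,g)=\widetilde{PR(r,g)}\star_{r+1}F'$ directly from $PR(r,g)$. For the base case $r=2$, I take $PR(2,g)$ to be the path $x$–$y$–$z$ with parts $\{y\}$ and $\{x,z\}$: it has no cycle, so its girth is infinite, and in any part-rainbow coloring $c$ one has $c(x)\neq c(z)$, so $c(x)=c(y)=c(z)$ is impossible and hence $xy$ or $yz$ is rainbow.

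For the inductive step assume $PR(r,g)$ is part-rainbow-forced of girth at least $g$. Since amalgamation keeps the first $r$ parts of distinct copies vertex-disjoint, I first note that for a part-rainbow coloring $c$ of $PR(r+1,g)$, its restriction to the first $r$ parts of each copy $K_e$ (indexed by $e\in E(F')$) is a part-rainbow coloring of that copy of $PR(r,g)$; by induction each $K_e$ contains a rainbow edge $P_e$ of the underlying $PR(r,g)$. Let $v_e\in e\subseteq V(F')$ be the vertex extending $P_e$ in $K_e$. If $c(v_e)\notin c(P_e)$ for some $e$ then $P_e\cup\{v_e\}$ is a rainbow edge and we are done; otherwise each $e$ admits $w_e\in P_e$ (lying in the first $r$ parts) with $c(w_e)=c(v_e)$. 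The crux, and exactly what the minimum-degree hypothesis is for, is the following count: since each of the parts $1,\dots,r$ of $PR(r+1,g)$ is internally rainbow, any fixed color occurs at most once per part and hence at most $r$ times among the first $r$ parts in total; for fixed $u\in V(F')$ the vertices $w_e$ with $v_e=u$ are distinct (they lie in distinct copies) and all carry the color $c(u)$, so there are at most $r$ edges $e$ with $v_e=u$. Summing over $u$ gives $|E(F')|=\sum_u|\{e:v_e=u\}|\le r\,|V(F')|$, while the handshake identity $\ell\,|E(F')|=\sum_u\deg(u)\ge \ell(r+1)\,|V(F')|$ forces $|E(F')|\ge (r+1)\,|V(F')|$, a contradiction; hence a rainbow edge exists.

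For the girth I would classify a cycle $C=(E_0,x_0,\dots,E_{m-1},x_{m-1})$ of $PR(r+1,g)$ by the copies containing its edges. Edges in different copies meet only in part $r+1=V(F')$, and within a single copy each vertex of $V(F')$ lies in a unique edge; as the edges of $C$ are distinct, a change of copy at $x_i$ forces $x_i\in V(F')$ to lie in two distinct edges of $F'$. If $C$ stays inside one copy it is a cycle of $\widetilde{PR(r,g)}$, and since the extension vertices have degree one it descends to a cycle of $PR(r,g)$ of the same length, so $m\ge g$ by induction. Otherwise the copy-changes, read cyclically, give a closed walk in $F'$ on the distinct vertices among the $x_i$ with consecutive edges distinct; shortcutting at a closest repeated edge extracts a genuine cycle of $F'$ of length at most the number $k$ of copy-changes, so $k\ge\mathrm{girth}(F')\ge g$, and as each of the $k$ runs contributes at least one edge, $m\ge k\ge g$.

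The step I expect to be the main obstacle is the girth analysis of cycles spanning several copies, in particular confirming that a cycle re-entering the same copy cannot produce a short cycle: here one must use both that the cycle vertices are distinct and that consecutive copies in the run decomposition differ, in order to extract an honest cycle of $F'$ rather than a degenerate closed walk. By contrast, once the invariant ``each color appears at most $r$ times among the first $r$ parts'' is isolated, the forcing direction reduces cleanly to the degree count that the construction was designed to make work.
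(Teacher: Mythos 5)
Your proof is correct and follows essentially the same route as the paper's: induction on $r$, restriction of the part-rainbow coloring to the copies of $PR(r,g)$, the minimum-degree/handshake count on $F'$ combined with the invariant that each color appears at most $r$ times in the first $r$ parts (you organize this as a global double count where the paper uses pigeonhole to find one vertex extending $r+1$ disjoint rainbow edges), and projection of a multi-copy cycle onto a closed walk in $F'$ via the boundary vertices in the $(r+1)^{\rm st}$ part. If anything, your girth argument is slightly more careful than the paper's, since you explicitly extract a genuine cycle of $F'$ when non-consecutive runs revisit the same copy, a degenerate case the paper passes over silently.
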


 \begin{proof}
By construction,  $PR(r,g)$ is an $r$-uniform $r$-partite hypergraph, $r\geq 2$.
We shall prove by induction on $r$ that $PR(r,g)$ is  part-rainbow-forced hypergraph of girth at least $g$.
When $r=2$, we see that a part-rainbow coloring assigns distinct colors to $x$ and $z$. 
Thus, no matter how $y$ is colored, $xy$ or $yz$ is rainbow. Moreover this graph is acyclic, so it has infinite girth.

Assume that $PR(r,g)$ is  part-rainbow-forced hypergraph of girth at least $g$. 
Let's prove that $H_{r+1} = PR(r+1,g)$ is  also part-rainbow-forced hypergraph of girth at least $g$.   Let $H_r= PR(r, g)$.
Recall that $H_{r+1}$ is an  amalgamation of copies  $\widetilde{H}_r^1$,  $\widetilde{H}_r^1$, \ldots, $\widetilde{H}_r^{e'}$  of $\widetilde{H_r}$ along the $(r+1)^{{\rm st}}$ part using $F'$, where $F' $ is an $\ell$-uniform hypergraph  of girth at least $g$, minimum degree $\ell (r+1)$,  $\ell = |E(H_r)|$,  and $e' = |E(F')|$. 
Recall further, that  $\widetilde{H}_r^i$ is obtained by an extension  operation tilde from  $H_r^i$, a copy of $H_r$.

First we shall verify that any part-rainbow coloring $c$ of $H_{r+1}$ results in a rainbow edge.
For any $i=1, \ldots, e'$, consider a restriction of $c$ to the vertex set of $H_r^i$. Since it is a copy of $H_r = PR(r,g)$,   it is again part-rainbow, so there is a rainbow edge $E'_i$ in that copy.  Let $ E'_i\cup \{v_i\}$ be a corresponding uniquely defined edge of $\widetilde{H}^i_{r}$. 
The vertices $v_1, \ldots, v_{e'}$  are vertices of $F'$. 
Since the minimum degree of $F'$ is at least $\ell (r+1)$, then $e'=|E(F')|\geq  |V(F')| \ell(r+1) / \ell = |V(F')| (r+1)$. 
Thus there are at least $r+1$ repeated vertices in the list $v_1, \ldots, v_{e'}$, i.e.,  w.l.o.g.  $v=v_1= \ldots = v_{r+1}$. 
Thus $v$ extends rainbow edges  $E'_1, E_2', \ldots, E_{r+1}'$  in $H_r^1, H_r^2, \ldots, H_r^{r+1}$.   
We claim that at least one of the extended edges  $E'_1\cup \{v\}, E_2'\cup \{v\}, \ldots, E_{r+1}'\cup\{v\}$ is rainbow. 
Assume not,  then $c(v)$ is present in each of $E'_1, E_2', \ldots, E_{r+1}'$. However, there are at most $r$ vertices of each given color in the first $r$ parts.  Since $E'_1, E_2', \ldots, E_{r+1}'$ are pairwise disjoint, we have a contradiction.

To see that the  girth of $H_{r+1}$  is at  least $g$, consider a  cycle $C$ in $H_{r+1}$, see bold edges in Figure \ref{part-rainbow}. If the edges of $C$ come from  one copy of $\widetilde{H}_r$, then  the length of $C$ is at least $g$ as the girth of $\widetilde{H}_r$ is the same as girth of $H_r$.   If the  edges of $C$  come from at least two distinct  copies of  $\widetilde{H}_r$,  then $C$ is  a  union of hyperpaths  $P_0, P_1, \ldots, P_{m-1}$ from different copies of $\widetilde{H}_r$, such that the consecutive paths  share a vertex in the last $(r+1)^{\rm st}$ part, i.e., $V(P_i)\cap V(P_{i+1}) = \{u_i\}$, 
$u_0, \ldots, u_{m-1}$ are distinct vertices from $V_{r+1}$, addition modulo $m$.
Thus $u_{i}$ and $u_{i+1}$ belong to the same copy of $\widetilde{H}_r$ and thus the same edge of $F'$, $i=0, \ldots, m-1$, addition modulo $m$.
 We see that these  edges of $F'$ form a cycle in $F'$ of length at most the length of $C$. 
 On the other hand, we know that any cycle in $F'$ has length at least $g$, implying that  $C$ has length at least $g$. This concludes the proof that $PR(r+1, g)$ is part-rainbow-forced of girth at least~$g$.     \end{proof}

Now we construct an  rm-unavoidable hypergraph $H(r,g)$ of uniformty $r$ and girth at least~$g$.\\

\begin{description}
\item[Construction of a hypergraph \textbf{\textit{H(r, g)}}]  ~~
For $g=2$ and any $r\geq 2$,  let $H(r, 2)$ be a complete $r$-uniform hypergraph on $(r-1)^2+1$ vertices.  
Assume that  for any $r\geq 2$,  $H(r,g-1)$ has been constructed. 
Let $F= PR(r, g)$ be  as given in the previous construction.  Let $a= (r-1)^2 +r$ and let $\cM_1$ be a complete $a$-partite  $F$-factor.
For any partite hypergraph $G$, let $|G|_i$ denote the size of the  $i^{{\rm th}}$ part of $G$.\\
Let $\cM_2 = \cM_1 \star_1 \cH_1$, where $\cH_1= H(|\cM_1|_1, g-1)$.  
Let $\cM_3= \cM_2 \star_2 \cH_2$,  where $\cH_2=H(|\cM_2|_2, g-1)$. 
In general, let $\cM_{j+1}= \cM_j \star_j \cH_j$,  where $\cH_j=H(|\cM_j|_j,g-1)$. 
We see that the $j^{\rm th}$ part of $\cM_{j+1}$ corresponds to the  vertex set of $\cH_j$.
Let $H(r,g)=\cM_{a+1}$.
\end{description}

\noindent
Now, we shall prove that this construction gives an rm-unavoidable hypergraph that is $r$-uniform and has girth $g$.  This will give  a proof of Theorem \ref{main}.

\begin{proof}[Proof of Theorem \ref{main}]
We shall show that $H(r, g)$ is an rm-unavoidable hypergraph of girth at least $g$, by induction on $g$.
When $g=2$, $H(r, 2)$  is a compete $r$-uniform hypergraph on $(r-1)^2 +1$ edges.  It has girth $2$ and in any vertex coloring there are either $r$ vertices of the same color, forming a monochromatic edge, or $r$ vertices of distinct colors, forming a rainbow edge.
Assume that for any $r\geq 2$,  $H(r, g-1)$  is an rm-unavoidable hypergraph of girth at least $g-1$.

Consider $H(r, g)  = \cM = \cM_{a+1}$ given in the construction.
Let  $c$ be a vertex coloring of $ \cM $. Consider the $a^{\rm th}$ part of $\cM=\cM_{a+1}$. 
This part corresponds to the vertex set of $\cH_a = H(|\cM_a|_a, g-1)$, an rm-unavoidable hypergraph.
Thus, there is a monochromatic or rainbow  subset $X_a$  in the $a^{\rm th}$  part of $\cM$ of size equal to the uniformity of $\cH_a$, i.e., of size $|\cM_a|_a$. 
Since  $X_{a}\in \cE(\cH_{a})$, $X_a$ is the $a^{\rm th}$ part of a copy of $\cM_a$.

Consider $(a-1)^{\rm st}$ part of this copy of $\cM_a$. Similarly to the above, there is a  monochromatic or rainbow  subset $X_{a-1}$ of this part 
of size equal to the uniformity of  $\cH_{a-1}= H(|\cM_{a-1}|_{a-1}, g-1)$, i.e., of size $|\cM_{a-1}|_{a-1}$.
Since  $X_{a-1}\in \cE(\cH_{a-1})$,  $X_{a-1}$  is the $(a-1)^{\rm st}$ part of  a copy of $\cM_{a-1}$ such that the $a^{\rm th}$ part of this copy is a subset of $X_a$.

Continuing in this manner we see that there is a monochromatic or a rainbow  subset $X_{j}$ of $j^{\rm th}$  part of $\cM_{j+1}$ 
of size equal to the uniformity of  $\cH_{j}$, i.e., of size $|\cM_{j}|_{j}$.
We have that  $X_{j}$ is the $j^{\rm th}$ part of  a copy of $\cM_{j}$ such that the $(j+t)^{\rm th}$ part of this copy is a subset of $X_{j+t}$, 
$j+t \in \{j+1, j+2, \ldots, a\}$.

Thus $X_1, X_2, \ldots, X_a$ form parts of an $a$-uniform sub-hypergraph of $\cM$ containing a copy of $\cM_1$. Recall that $\cM_1$  is a complete $a$-partite  $F$-factor. 
Each of these parts is monochromatic or rainbow. Since  $a= (r-1)^2 + r$, there are 
 either at least $r$ parts that are rainbow or  at least $(r-1)^2+1$ parts that are monochromatic. 
 If there are $r$ rainbow parts,  the copy of $F$ on these parts  contains a rainbow edge as $F$ is part-rainbow-forced.
 So, assume that  there are at least $(r-1)^2+1$ monochromatic parts. If there are $r$ of those that are of the same color, any edge in a copy of $F$ on these parts is monochromatic. Otherwise there are at most $(r-1)$ parts of each given color, so there are  $r$ monochromatic parts of distinct colors. These 
$r$ parts in turn contain an edge of $F$, and since an edge has at most one vertex from each part, this edge is rainbow.\\

Now, we  verify that the girth of $\cM$ is at least $g$ by an argument similar to one of Lemma~\ref{Lem2}.
To do that, we shall prove by induction on $j$, that $\cM_j$ has girth at least $g$, $j=1, \ldots, a$.
Since $\cM_1$ is a complete $a$-partite $F$ factor, it has girth equal to the girth of $F$, that is at least  $g$.
Assume that $\cM_j$ has girth at least $g$. Let's prove that $\cM_{j+1}$ has girth at least $g$.  
   Recall that $\cM_{j+1} = \cM_j \star_j \cH_j$, i.e., $\cM_{j+1}$ is obtained 
 by amalgamating copies of $\cM_j$ along $\cH_j = H(|\cM_j|_j, g-1)$.  Let $X$ be the $j^{\rm th}$ part of $\cM_{j+1}$, i.e., the vertex set of $\cH_j$.
 Consider a shortest cycle $C$ in $\cM_{j+1}$.  If $C$ is a subgraph of one of these copies of $\cM_j$, then by induction $C$ has length at least $g$.  
  If the  edges of $C$  come from at least two distinct  copies of  $\cM_j$,  then $C$ is  an edge-disjoint  union of hyperpaths   $P_0, P_1, \ldots, P_{m-1}$, each with at least $2$ edges,  from different copies of $\cM_j$, such that the consecutive paths  share a vertex in $X$, i.e., $V(P_i)\cap V(P_{i+1}) = \{u_i\}$, $i=0, \ldots, m-1$, and 
$u_0, \ldots, u_{m-1}$ are distinct vertices from $X$, addition modulo $m$. 
Thus $u_{i}$ and $u_{i+1}$ belong to the same copy of  $\cM_j$ and thus  correspond to the vertices from the same edge of $\cH_j$, $i=0, \ldots, m-1$, addition modulo $m$.
 We see that these  edges of $\cH_j$ form a cycle in $\cH_j$ of length at most half the length of $C$. 
 On the other hand, we know that any cycle in $\cH_j$ has length at least $g-1$, implying that  $C$ has length at least $2(g-1)\geq g$.    
 This concludes the proof of Theorem \ref{main} using an explicit construction. 
 \end{proof}

\section{Proof of Theorem \ref{main} - Probabilistic Construction}\label{probabilistic}

This proof is just a slight generalization of the probabilistic construction for high-girth, high-chromatic-number hypergraphs by Ne\v{s}et\v{r}il  and  R\"odl.
Let an  $\ell$-cycle be a cycle of length $\ell$. 
Let $r, g$ be fixed, put $R = (r-1)^2+1$ and consider an $R$-uniform hypergraph $\cH= \mathcal{H}(n,R,g) =(X, \mathcal{E})$ with  $n$ vertices, girth at least $ g$, and with $|\mathcal{E}|=\lceil n^{1+\frac{1}{g}}\rceil$. Such a graph exists, if $n$ is large enough by Lemma \ref{girth-edges}, see Appendix. 
 
Let's order the hyperedges of $\cH$ as $E_1, E_2, \ldots, E_m$. 
Let $\cM_n$ be the family of all  sequences   $(E_1',\ldots, E_m')$ such that   $|E_i'|=r$ and $E_i'\subseteq E_i$, $i=1, \ldots, m$. For a given sequence $Q \in \cM_n$, let $\cH_Q$ be a hypergraph whose  hyperedges are elements of $Q$.  We say that a coloring of $X$ is {\it good} for $Q$ if there are no monochromatic and no rainbow edges under this coloring of $\cH_Q$.  We say that $Q$ is colorable if there is  a coloring of $X$ that is good for $Q$. We shall count the number of colorable sequences and shall show that it is strictly less than the number of all sequences in $\cM_n$. This will imply that there is a non-colorable sequence corresponding to an rm-unavoidable hypergraph.

Each  hypergraph $\cH_Q$, $Q\in \cM_n$  has girth  at least $ g$ since $\mathcal{H}$ has this property.  In addition $|\mathcal{M}_n|\ge a^{n^{1+\frac{1}{g}}}$, where $a = \binom{R}{r}$, since there are $a$ ways to choose an $r$-element subset from an edge of $\cH$ and  $m\geq  n^{1+\frac{1}{g}}$.
Now we consider a coloring of $X$ with arbitrary number of  colors. Each edge $E$ of $\mathcal{H}$ is colored with at least $r$ or  less than $r$ colors. 
If $E$ is colored with less than $r$ colors, there are $r$ vertices in $E$ of the same color since  $E$ has $R = (r-1)^{2}+1$ elements and $\frac{R}{(r-1)}>(r-1)$. 
If $E$ is colored with at least $r$ colors, there are $r$ vertices with pairwise distinct colors. Thus each edge $E$ of $\cH$  contains a "bad" subset that is either monochromatic or rainbow, and only at most $\binom{|E|}{r} -1 = \binom{R}{r}-1 = a-1$  of all  $r$-element subsets of $E$ could be "good".  Therefore each coloring $c$ of $X$ is good for at most $(a-1)^{\lceil n^{1+\frac{1}{g}}\rceil} \leq (a-1)^{ 1+n^{1+\frac{1}{g} }} $ members of  $\mathcal{M}_n$. Since the total number of colors in $X$ is at most $n$ in any coloring, it is enough to consider colorings with colors ${1,\dots,n}$. Since there are $n^n$ colorings with $n$ colors  we have that 
\begin{eqnarray*}
|\{Q \in \mathcal{M}_n| ~Q \text{ is colorable} \}|
&=&|\bigcup_{c:X\rightarrow [n]}{\bigcup_{Q \in \mathcal{M}_n} \{Q |~ c \text{ is good for } Q\}}| \\
&\le& \sum_{c:X\rightarrow [n]}{|\bigcup_{Q \in \mathcal{M}_n} \{Q |~ c \text{ is good for } Q\}|} \\
&\le& n^n \cdot (a-1)^{1+ n^{1+\frac{1}{g}}}.
\end{eqnarray*}

Next we shall show that    $n^n \cdot (a-1)^{1+ n^{1+\frac{1}{g}}}< a^{ n^{1+\frac{1}{g}}}$ for all sufficiently large $n$.  
Indeed, $n^n (a-1)^{1+ n^{1+\frac{1}{g}}}< a^{ n^{1+\frac{1}{g}}} ~\Leftrightarrow~
  n \ln(n)  + \ln(a-1) < n^{1+\frac{1}{g}}    \ln   \left(  {\frac{a}{a-1}}   \right).$
The last inequality holds  since $\ln({\frac{a}{a-1}})>0$.
Therefore the number of colorable members from  $\cM_n$ is less than the total number of members in  $\cM_n$ and thus there is an non-colorable $Q\in \cM_n$ that gives  $\cH_Q$, an $r$-uniform hypergraph of girth at least $g$ that is rm-unavoidable. \qed

\bibliographystyle{plain}
\bibliography{orderedLiterature.bib}

\begin{thebibliography}{10}

\bibitem{alon}
Noga Alon, Alexandr Kostochka, Benjamin Reiniger, Douglas West, and Xuding Zhu.
\newblock Coloring, sparseness, and girth.
\newblock {\em Israel Journal of Mathematics, to appear}.

\bibitem{duffus}
Dwight Duffus, Vojt\v{e}ch R\"{o}dl, Bill Sands, and Norbert Sauer.
\newblock Chromatic numbers and homomorphisms of large girth hypergraphs.
\newblock In {\em Topics in Discrete Mathematics}, pages 455--471. Springer,
  2006.

\bibitem{erd59}
Paul Erd\H{o}s.
\newblock Graph theory and probability.
\newblock {\em Canadian Journal of Mathematics}, 11:34--38, 1959.

\bibitem{erd66}
Paul Erd\H{o}s and Andr\'{a}s Hajnal.
\newblock {On chromatic number of graphs and set-systems}.
\newblock {\em Acta Mathematica Hungarica}, 17(1-2):61--99, 1966.

\bibitem{erd75}
Paul Erd\H{o}s and L\'{a}szl\'{o} Lov\'{a}sz.
\newblock Problems and results on 3-chromatic hypergraphs and some related
  questions.
\newblock {\em Infinite and finite sets}, 10(2):609--627, 1975.

\bibitem{feder98}
Tom\'{a}s Feder and Moshe Y.~Vardi.
\newblock {The computational structure of monotone monadic SNP and constraint
  satisfaction: A study through Datalog and group theory}.
\newblock {\em SIAM Journal on Computing}, 28(1):57--104, 1998.

\bibitem{karrer}
Annette Karrer.
\newblock Colorability of uniform hypergraphs without monochromatic and rainbow
  edges: Master thesis.

\bibitem{kost99}
Alexandr Kostochka and Jaroslav Ne\v{s}et\v{r}il.
\newblock {Properties of Descartes' construction of triangle-free graphs with
  high chromatic number}.
\newblock {\em Combinatorics, Probability and Computing}, 8(05):467–--472,
  1999.

\bibitem{kost10}
Alexandr Kostochka and Vojt\v{e}ch R\"{o}dl.
\newblock {Constructions of sparse uniform hypergraphs with high chromatic
  number.}
\newblock {\em Random Structures \& Algorithms}, 36(1):46--56, 2010.

\bibitem{kriz89}
Igor K{\v{r}}{\'\i}{\v{z}}.
\newblock {A hypergraph-free construction of highly chromatic graphs without
  short cycles}.
\newblock {\em Combinatorica}, 9(2):227–--229, 1989.

\bibitem{kun}
G{\'a}bor Kun.
\newblock {Constraints, MMSNP and expander relational structures}.
\newblock {\em Combinatorica}, 33(3):335--347, 2013.

\bibitem{lov68}
L\'{a}szl\'{o} Lov\'{a}sz.
\newblock {On chromatic number of finite set-systems}.
\newblock {\em Acta Mathematica Hungarica}, 19(1-2):59--67, 1968.

\bibitem{muller75}
Vladimir M\"{u}ller.
\newblock {On colorable critical and uniquely colorable critical graphs}.
\newblock {\em Recent advances in graph theory (ed. M. Fiedler), Academia,
  Prague}, 1975.

\bibitem{muller79}
Vladimir M\"{u}ller.
\newblock {On colorings of graphs without short cycles}.
\newblock {\em Discrete Mathematics}, 26(2):165--176, 1979.

\bibitem{nese13}
Jaroslav Ne\v{s}et\v{r}il.
\newblock {A combinatorial classic -- sparse graphs with high chromatic
  number}.
\newblock In {\em {Erd\H{o}s Centennial}}, pages 383--407. Springer, 2013.

\bibitem{nese78}
Jaroslav Ne\v{s}et\v{r}il and Vojt\v{e}ch R\"{o}dl.
\newblock {On a probabilistic graph-theoretical method}.
\newblock {\em Proceedings of the American Mathematical Society},
  72(2):417--421, 1978.

\bibitem{nese79}
Jaroslav Ne\v{s}et\v{r}il and Vojt\v{e}ch R\"{o}dl.
\newblock {A short proof of the existence of highly chromatic hypergraphs
  without short cycles.}
\newblock {\em Journal of Combinatorial Theory, Series B}, 27(2):225--227,
  1979.

\bibitem{raig11}
Andrei Raigorodskii and Dmitri Shabanov.
\newblock {The Erd\H{o}s-Hajnal problem of hypergraph colouring, its
  generalizations, and related problems}.
\newblock {\em Russian Mathematical Surveys}, 66(5):933, 2011.

\bibitem{V}
Vitaly Voloshin.
\newblock {\em {Coloring mixed hypergraphs: theory, algorithms, and
  applications}}.
\newblock American Mathematical Society, 2002.

\end{thebibliography}

\section{Appendix}

\begin{lemma}
For any $\ell, g \geq 2$, $q\geq 1$ there is an $\ell$-uniform hypergraph of girth at least $g$ and minimum degree  at least $q$.
\end{lemma}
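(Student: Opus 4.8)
The plan is to deduce this from the classical \erdoes{}--Hajnal theorem quoted in the introduction, via the standard fact that large chromatic number forces a subhypergraph of large minimum degree. Concretely, I would first invoke that theorem with uniformity $r=\ell$, girth $g$, and chromatic number $q+1$: it produces an $\ell$-uniform hypergraph $\HH$ of girth at least $g$ that is not properly $q$-colorable, i.e.\ every assignment of $q$ colors to $V(\HH)$ creates a monochromatic edge. (All parameters are in the admissible range, since $q\ge 1$ forces the target chromatic number $q+1\ge 2$.)

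The key step is a degeneracy argument showing that non-$q$-colorability yields the desired minimum degree. I claim that if \emph{every} subhypergraph of an $\ell$-uniform hypergraph has a vertex of degree at most $q-1$, then the hypergraph is properly $q$-colorable. To prove this, peel off vertices of degree at most $q-1$ one at a time to obtain an ordering $v_1,\ldots,v_N$ in which each $v_i$ lies in at most $q-1$ edges $E$ with $v_i\in E\subseteq\{v_i,\ldots,v_N\}$. Color the vertices in the reverse order $v_N,\ldots,v_1$ using colors from $\{1,\ldots,q\}$. When $v_i$ is colored, the only edges that could possibly become monochromatic are those at most $q-1$ edges $E$ with $v_i\in E\subseteq\{v_i,\ldots,v_N\}$, and each such edge forbids at most one color for $v_i$, namely the common color of $E\setminus\{v_i\}$ should it have one; hence a color is always available. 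Since each edge $E$ satisfies $E\subseteq\{v_m,\ldots,v_N\}$ for its lowest-indexed vertex $v_m$, every edge is handled at the moment $v_m$ is colored, so no edge ends up monochromatic.

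Taking the contrapositive, since the $\HH$ above is not $q$-colorable, some subhypergraph $\HH'\subseteq\HH$ has minimum degree at least $q$. Finally I would note that $\HH'$ is still $\ell$-uniform (deleting edges and isolated vertices preserves uniformity) and that its girth is at least that of $\HH$, since any cycle in $\HH'$ is already a cycle in $\HH$; thus $\HH'$ is an $\ell$-uniform hypergraph of girth at least $g$ with minimum degree at least $q$, as required.

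The only nonroutine point — and the main thing to check carefully — is the greedy coloring step for \emph{hypergraphs} rather than graphs: one must verify that an edge only becomes constraining when its last-colored vertex is reached and that it then rules out a single color, which hinges on the fact that monochromaticity requires \emph{all} vertices of the edge to share a color, not merely two. A direct probabilistic proof is also available (random $\ell$-uniform hypergraph with expected degree tending to infinity slowly, then delete one edge per short cycle and bound the degree loss), but the minimum-degree requirement makes that alteration bookkeeping delicate, so I would favor the reduction to \erdoes{}--Hajnal described above.
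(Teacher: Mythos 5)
Your proof is correct and is essentially the paper's own argument: both start from the \erdoes{}--Hajnal high-girth, high-chromatic hypergraph and extract a minimum-degree-$q$ subhypergraph by removing vertices lying in at most $q-1$ edges, using the observation that each such edge forbids at most one color so a color among $q$ always remains. The only difference is presentational — the paper deletes low-degree vertices one at a time and argues the chromatic number stays above $q$ at each step, while you package the same peeling-plus-greedy-coloring argument as a degeneracy lemma and take its contrapositive.
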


\begin{proof}
To see that such a hypergraph exists, consider an $\ell$-uniform hypergraph $F$ of girth at least $g$ and chromatic number greater than  $q$. 
If $F$ has a vertex $v$ that belongs to at most $q-1$ edges, delete it from $F$. We obtain a hypergraph $F-v$ of chromatic number greater than  $q$ again because otherwise we can take a proper coloring of $F-v$ with at most $q$ colors and extend it to a proper coloring of $F$. Indeed, if $E_1, \ldots, E_{q'}$, $q'\leq q-1$ are the edges incident to $v$, choose a color for $v$ that is not a color of monochromatic $E_i- v$ under the proper coloring of $F-v$, $i=1, \ldots, q'$, if such a monochromatic edge exists.  Since only at most $q-1$ colors are forbidden for $v$, one color is still available.
Continue this deletion process until possible. The process must stop with a non-empty graph of chromatic number greater than $q$ and minimum degree at least $q$. 
Since it is a sub-hypergraph of the original hypergraph, it has girth at least $g$. 
\end{proof}

\begin{lemma} [\cite{nese78}] \label{number-cycles}
 Let $C(r,\ell,n)$ be the number of $\ell$-cycles in the $r$-uniform complete hypergraph on $n$ vertices, $~ r \ge 3 $. Then 
$C(r,\ell,n)\leq  c(r, \ell)\binom{n}{(r-1)\ell},$ for a function $c(r,\ell)$ independent of $n$.
\end{lemma}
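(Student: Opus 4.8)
The plan is to bound the number of vertices spanned by a single $\ell$-cycle and then to count cycles by first choosing their vertex set. The first step is to observe that an $\ell$-cycle with edges $E_0,\ldots,E_{\ell-1}$ and cycle vertices $x_0,\ldots,x_{\ell-1}$ spans at most $(r-1)\ell$ vertices. Indeed, each edge $E_i$ has exactly $r$ vertices (we work in the complete $r$-uniform hypergraph) and, by the definition of a cycle, contains the two cycle vertices $x_{i-1}$ and $x_i$ (indices modulo $\ell$). These two are distinct because $x_0,\ldots,x_{\ell-1}$ are pairwise distinct, so $E_i$ contributes at most $r-2$ vertices beyond the set $\{x_0,\ldots,x_{\ell-1}\}$. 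Summing over the $\ell$ edges, the total number of vertices is at most $\ell + \ell(r-2) = (r-1)\ell$, and any further coincidences among the remaining vertices only decrease this count.

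The second step turns this vertex bound into the claimed counting bound. For any $\ell$-cycle $C$, its vertex set $V(C)$ has size at most $(r-1)\ell$, hence $V(C)$ is contained in at least one set $S$ of vertices with $|S| = (r-1)\ell$. Counting each cycle once for every such $S$ that contains it (an overcount, which is harmless since we only want an upper bound) yields
\[
C(r,\ell,n) \le \sum_{\substack{|S| = (r-1)\ell}} N(S),
\]
where $N(S)$ is the number of $\ell$-cycles whose vertex set is contained in $S$. Because $N(S)$ counts $\ell$-cycles inside the complete $r$-uniform hypergraph on a fixed set of $(r-1)\ell$ vertices, it is finite and depends only on $r$ and $\ell$, neither on the particular set $S$ nor on $n$; I would take this value as the function $c(r,\ell)$. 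Since there are exactly $\binom{n}{(r-1)\ell}$ choices for $S$, this gives $C(r,\ell,n) \le c(r,\ell)\binom{n}{(r-1)\ell}$, which is the desired estimate.

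The argument is elementary and I do not anticipate a genuine obstacle; the only point requiring care is the vertex-count bound of the first step, and in particular the verification that the two cycle vertices $x_{i-1}, x_i$ lying in $E_i$ are distinct, so that each edge truly contributes at most $r-2$ fresh vertices. One could instead make $c(r,\ell)$ fully explicit by directly selecting the $\ell$ edges together with their cyclic incidence pattern, but folding everything into the constant $c(r,\ell)$ keeps the proof short and is entirely sufficient, since the lemma only asserts the existence of such a function independent of $n$.
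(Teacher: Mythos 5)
Your proof is correct and follows essentially the same route as the paper: both bound the number of vertices spanned by an $\ell$-cycle by $(r-1)\ell$ (each edge $E_i$ contains two distinct cycle vertices, hence contributes at most $r-2$ fresh vertices) and then count cycles by choosing a vertex set of size $(r-1)\ell$ and absorbing the number of cycles inside such a set into $c(r,\ell)$. Your version is slightly more careful in handling cycles that span fewer than $(r-1)\ell$ vertices, by counting them via containing sets $S$, but this is a refinement of the same argument rather than a different one.
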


\begin{proof}
Observe that  the largest  number of vertices in an $\ell$-cycle $C$ of length $\ell$ is $(r-1)\ell$. 
Indeed a cycle $C$ of length $\ell$ is defined as a subhypergraph $C$ with $\ell$ distinct vertices $x_0, \dots, x_{\ell-1}$,  $\ell \ge 2$  and distinct hyperedges $E_0, \dots E_{\ell-1}$ such that $x_i, x_{i+1} \in E_i, i = 0, \dots, \ell-1$, addition of indices modulo $\ell$. 
Thus, each hyperedge $E_i$, $i=0, \ldots, \ell-1$,  has at most $r-2$ vertices not in the set $\{x_0, \ldots, x_{\ell-1}\}$. 
Therefore the total number of vertices in $C$ is at most $\ell(r-2) + |\{x_0, \ldots, x_{\ell-1}\}| = \ell(r-2) + \ell = \ell(r-1)$.
Thus, an upper bound on the number of all  $\ell$-cycles is  $\binom{n}{\ell(r-1)}\cdot c(r,\ell)$, where  $\binom{n}{\ell(r-1)}$  is the number of ways
to choose a set on $\ell(r-1)$ vertices and $c(r,\ell)$ is the number of $\ell$-cycles on a given set of $\ell(r-1)$ vertices.
\end{proof}

\begin{lemma} [\cite{nese78}]\label{girth-edges}
For any positive integers $r$ and $s$, $r \ge 2, s \ge 3$  there exists an $n_0 \in \mathbb{N}$ such that for any  $n\ge n_0, \in \mathbb{N}$ there exists an $r$-uniform hypergraph $(X, \mathcal{E})$ with girth at least $s$ and with $|\mathcal{E}|> n^{1+\frac{1}{s}}$.
\label{lem:highgirth+edges}
\end{lemma}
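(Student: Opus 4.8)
The plan is to use the probabilistic deletion method. First I would fix the target exponent $t=\tfrac{1}{s-1}$ and consider the random $r$-uniform hypergraph $H$ on vertex set $[n]$ in which each of the $\binom{n}{r}$ possible $r$-sets is included as a hyperedge independently with probability $p=n^{t-(r-1)}$; note that $p<1$ for large $n$ since $t-(r-1)<0$. The point of this choice is that $H$ will have, in expectation, far more hyperedges than short cycles, so that deleting one hyperedge from each cycle of length less than $s$ leaves a hypergraph of girth at least $s$ that still has more than $n^{1+1/s}$ hyperedges.

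Next I would compute the two relevant expectations. By linearity, $\Exp[|\mathcal{E}(H)|]=p\binom{n}{r}$, which is asymptotic to $n^{1+t}/r!$. For the cycles, a fixed $\ell$-cycle consists of $\ell$ prescribed hyperedges and hence appears in $H$ with probability $p^\ell$; combining this with Lemma~\ref{number-cycles}, which bounds the number of potential $\ell$-cycles in the complete $r$-uniform hypergraph by $c(r,\ell)\binom{n}{(r-1)\ell}$, the expected number of $\ell$-cycles in $H$ is at most $c(r,\ell)\binom{n}{(r-1)\ell}p^\ell\le c(r,\ell)\,(n^{r-1}p)^\ell=c(r,\ell)\,n^{t\ell}$. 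For $r=2$ the same estimate holds with $(r-1)\ell=\ell$ vertices per cycle, which is the classical graph bound, so the hypothesis $r\ge 3$ in Lemma~\ref{number-cycles} is not an obstacle. Summing over $2\le \ell\le s-1$, the expected total number of cycles of length less than $s$ is $O(n^{t(s-1)})$.

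The value $t=\tfrac{1}{s-1}$ is chosen precisely so that both $1/s<t$ and $t(s-2)<1$ hold; the first guarantees that the surviving edge count beats $n^{1+1/s}$, and the second gives $t(s-1)<1+t$, so the expected number of short cycles is $o(n^{1+t})$ and in particular at most $\tfrac12\Exp[|\mathcal{E}(H)|]$ for $n$ large. Hence $\Exp\big[\,|\mathcal{E}(H)|-\#\{\text{cycles of length}<s\}\,\big]\ge \tfrac12 n^{1+t}/r!$ for large $n$, and I would fix an outcome $H_0$ attaining at least this value. Deleting one hyperedge from each cycle of length less than $s$ in $H_0$ destroys every such cycle (removing edges cannot create new ones) and removes at most $\#\{\text{cycles of length}<s\}$ hyperedges, so the resulting hypergraph $H_0'$, still living on $[n]$, has girth at least $s$ and at least $\tfrac{1}{2r!}n^{1+t}$ hyperedges. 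Since $1+t=1+\tfrac{1}{s-1}>1+\tfrac1s$, for all $n\ge n_0$ this exceeds $n^{1+1/s}$, as required.

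The main obstacle is the balancing of exponents: one must pin $t$ inside the window $\tfrac1s<t<\tfrac1{s-2}$ so that the expected number of short cycles is genuinely negligible against the expected number of hyperedges while the leftover after deletion still surpasses $n^{1+1/s}$. This window is nonempty exactly because $s\ge 3$, and everything else — the per-cycle probability $p^\ell$, the cycle count supplied by Lemma~\ref{number-cycles}, and the fact that deletion only increases girth — is routine once $t$ is fixed.
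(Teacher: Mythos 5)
Your proposal is correct and follows essentially the same route as the paper: both are the Erd\H{o}s-style deletion method, using Lemma~\ref{number-cycles} to bound the expected number of cycles of length less than $s$ and then deleting one edge from each short cycle. The only differences are technical: you use the binomial model with independent edges and density tuned to $n^{1+1/(s-1)}$ (so a cycle of length $\ell$ appears with probability exactly $p^{\ell}$, and you must lower-bound $\Exp\left[\,|\mathcal{E}(H)|-X_{\mathrm{bad}}\,\right]$), whereas the paper samples uniformly among hypergraphs with exactly $m=2\lceil n^{1+1/s}\rceil$ edges (so the edge count is deterministic and only $\Exp[X_{\mathrm{bad}}]$ needs bounding, at the cost of a hypergeometric estimate); your explicit handling of the $r=2$ case of Lemma~\ref{number-cycles} is a detail the paper leaves implicit.
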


\begin{proof}
We consider a set $\cM= \mathcal{M}(n, r, s)$ of all $r$-uniform  hypergraphs on vertex set $[n]$ with  
 $m=2\lceil n^{1+\frac{1}{s}}\rceil$ edges.
Then $|\cM| = \binom{\binom{n}{r}}{m}$. 
Choose a hypergraph  $\cH$ from  $\cM$  randomly and uniformly, i.e., with probability $\frac{1}{|\cM|}$.
Let $K$ be a complete $r$-uniform hypergraph on vertex set $[n]$. 
Call cycles  of length smaller than $s$ bad. Let $X_j$ be the  number of  cycles of length $j$ in $\cH$ and $X_{bad}$ be the \ number of bad cycles.
Then ${\rm Exp}(X_{j}) = \sum _{C}   {\rm Prob} (C\subseteq \cH)$, where the sum is over all cycles $C$ of length $j$ in $K$. 
Then ${\rm Exp}(X_{j}) \leq  C(r, j, n)  {\frac{\binom{\binom{n}{r}-j}{m-j}}{\binom{\binom{n}{r}}{m}}}$, where $C(r,j,n)$ is the number of cycles of length $j$ in $K$ and 
second term is the probability of occurrence of such a cycle. Using Lemma \ref{number-cycles}, we have that 
${\rm Exp}(X_{j}) \leq  c(r, j) \binom {n}{(r-1)j} {\frac{\binom{\binom{n}{r}-j}{m-j}}{\binom{\binom{n}{r}}{m}}}$. Then, for constants $\widetilde{c}(r,j)$, $j=2, \ldots, s-2$ and $\widetilde{C}(r,s)$, we have

\begin{align*}
{\rm Exp}(X_{bad })&= \sum_{j=2}^{s-1}  {\rm Exp}(X_{j})\\
                &\leq  \sum_{j=2}^{s-1}{c(r,j)\cdot \binom{n}{(r-1)j}{\frac{\binom{\binom{n}{r}-j}{m-j}}{\binom{\binom{n}{r}}{m}}}}              \\  
               &= \sum_{j=2}^{s-1}{c(r,j)\cdot \binom{n}{(r-1)j}{\frac{m\cdot(m-1)\cdots(m-j+1)}{\binom{n}{r}\cdot(\binom{n}{r}-1)\cdots(\binom{n}{r}-j+1)}}}\\
              & \leq  \sum_{j=2}^{s-1}  c(r,j) \cdot  \binom{n}{(r-1)j}   \left(  \frac{m}{\binom{n}{r}}   \right)^j\\             
                &\le  \sum_{j=2}^{s-1}   \widetilde{c}(r,j)  n^{(r-1)j - rj}  m^j \\
                &\le  \sum_{j=2}^{s-1}   \widetilde{c}(r,j)  n^{(r-1)j - rj} n^{(1 +1/s)j} \\
               & \leq  \widetilde{C}(r,s)  n.
                 \end{align*}  
                 
  Since $      {\rm Exp}(X_{bad })\leq     \widetilde{C}(r,s)  n$, there is a hypergraph from $\cM$ with at most       $\widetilde{C}(r,s)  n$ cycles of length at most $s-1$. 
  Delete an edge from each such cycle and obtain a hypergraph on at least $2 n^{1+1/s} - \widetilde{C}(r,s)  n >  n^{1+1/s}$ edges and girth at least $s$.
\end{proof}

\end{document}